\documentclass[10pt, english]{article}
\usepackage[utf8]{inputenc}
\usepackage[T1]{fontenc}

\usepackage[authoryear, round]{natbib}

\usepackage{amsmath}
\usepackage{amsfonts}
\usepackage{amssymb}
\usepackage{graphicx}
\usepackage{float}
\usepackage{relsize}
\usepackage{tikz}
\usetikzlibrary{arrows}

\usepackage{color}%
\providecommand{\U}[1]{\protect\rule{.1in}{.1in}}
\newtheorem{theorem}{Theorem}

\newtheorem{corollary}[theorem]{Corollary}

\newtheorem{example}[theorem]{Example}

\newenvironment{proof}[1][Proof]{\noindent\textbf{#1.} }{\ \rule{0.5em}{0.5em}}

\title{Condensation of the digraph associated with a reciprocal matrix and a vector} 
\author{Ros\'{a}rio Fernandes \thanks{Email: mrff@fct.unl.pt}\\Center for Mathematics and Applications (NOVA Math)  \\and Department of Mathematics \\NOVA School of Science and Technology (NOVA FCT)
\\2829-516 Caparica, Portugal}
\date{\today}

\usepackage[utf8]{inputenc}
\usepackage[english]{babel}

\usepackage{natbib}
\setcitestyle{authoryear,open={(},close={)}} 

\begin{document}

\maketitle

\begin{abstract}
Reciprocal matrices are a fundamental tool in the Analytic Hierarchy Process (AHP), where priority vectors are typically derived from pairwise comparisons. The efficiency of a positive vector, in the sense of Pareto optimality, can be characterized through the strong connectivity of a directed graph $G_{A,w}$ associated with a reciprocal matrix $A$ and a vector $w$.

In this paper, we investigate the structure of the condensation digraph of $G_{A,w}$ in the case where $w$ is inefficient, with particular emphasis on the Perron vector. We provide a characterization of this structure and derive several constructive results. In particular, we show how efficiency can be achieved by modifying a single pair of reciprocal entries, and we construct an augmented reciprocal matrix whose efficient vector extends $w$. Finally, we propose a procedure to transform $w$ into an efficient vector for $A$.
\end{abstract}

\textbf{Keywords}: decision processes, reciprocal
matrix, efficient vector, strongly connected digraph, condensation digraph.

\textbf{AMS Subject Classification}: 05C50, 15A18, 15A24, 05C45, 90B50, 91B06.

\section{Introduction}

\hspace{3ex}Pairwise comparisons constitute one of the oldest formal approaches to preference modelling and multicriteria decision analysis. Their origins can be traced back to the thirteenth century, and their intuitive nature has made them one of the most successful methodologies for eliciting preferences among competing alternatives. Over the last five decades, pairwise comparison methods have evolved into a mature mathematical discipline at the intersection of linear algebra, optimization, graph theory and decision sciences.

Among methodologies based on pairwise comparisons, the Analytic Hierarchy Process (AHP), originally proposed by \citet{saaty1977scaling} and further developed in \citep{saaty1980,saaty1984comparison}, has become the dominant framework for multicriteria decision making. Owing to its simplicity and flexibility, AHP has found applications in engineering, management, economics, environmental sciences, logistics, healthcare and many other areas; see, for example, \cite{thakkar2021multi}. Beyond its practical success, AHP has stimulated extensive mathematical research on the properties of reciprocal matrices and the derivation of priority vectors.

The central mathematical object in AHP is the reciprocal pairwise comparison matrix, whose entries quantify the relative preference between pairs of alternatives or criteria. When the judgements are perfectly consistent, the associated priority vector is uniquely determined. In practice, however, inconsistency is almost unavoidable, giving rise to one of the fundamental questions in AHP: how should priorities be derived from an inconsistent reciprocal matrix?

Saaty answered this question by proposing the normalized positive eigenvector associated with the largest eigenvalue as the priority vector associated with a reciprocal matrix. Although this vector remains the standard choice in most applications, its mathematical properties have been the subject of continuous investigation \citep{crawford1985note,barzilai1997deriving}. 
Alternative priority derivation methods—including the logarithmic least squares method, optimization-based approaches and axiomatic procedures—have also been proposed, each emphasizing different theoretical properties of the resulting priority vectors \citep{ishizaka2006derive,fichtner1986,genest1994,brunelli2018survey}.

Recent years have witnessed renewed interest in the mathematical theory of pairwise comparison matrices. Significant contributions by Bozóki and collaborators have advanced the understanding of inconsistency analysis, incomplete pairwise comparison matrices and efficient weight vectors, while Csató has developed an elegant axiomatic framework for weighting methods and established several fundamental properties of the eigenvector method \citep{temesi2024incomplete,csato2024coincidence,csato2021monotonicity,csato2024right}. Complementary surveys by Ishizaka, Brunelli and their collaborators have synthesized these developments and identified several methodological and theoretical challenges that remain open \citep{ishizaka2011review,brunelli2014introduction}. Together, these works demonstrate that priority derivation from reciprocal matrices continues to be an active area of research.

Beyond comparing different weighting methods lies a more fundamental mathematical question. Rather than asking how a priority vector should be computed, one may ask whether the resulting vector is optimal with respect to the information contained in the reciprocal matrix. This viewpoint naturally leads to the notion of Pareto efficiency of priority vectors, introduced by \citet{blanquero2006inferring}. Their work established a new perspective on priority derivation by interpreting efficient vectors as nondominated approximations of the original pairwise judgements.

 \citet{blanquero2006inferring} established a graph-theoretical characterization of efficient vectors. They proved that the efficiency of a priority vector is equivalent to the strong connectivity of a digraph naturally associated with the reciprocal matrix and the vector itself. This result revealed a deep connection between Pareto optimality and graph theory, opening new directions for the structural analysis of reciprocal matrices.

The efficiency of the normalized positive eigenvector associated with the largest eigenvalue has subsequently become one of the central topics in the mathematical theory of AHP. Many researchers have studied the corresponding eigenvector by setting its first component equal
to one, the \emph{Perron vector}, because the efficiency of a priority vector is the same as all positive multiple vectors of it. Although this vector is efficient for many important classes of reciprocal matrices, it may fail to be Pareto optimal even for matrices exhibiting relatively mild inconsistency. This phenomenon has motivated a sequence of investigations devoted to identifying classes of reciprocal matrices for which this vector is guaranteed to be efficient. In particular,  \citet{abele2016efficiency} analysed matrices obtained by perturbing a single entry and its reciprocal entry of a consistent reciprocal matrix, \citet{abele2018efficiency} extended these results to double perturbations, while  \citet{fernandes2022efficiency} and  \citet{fernandes2026} considered perturbations involving three and four entries.

Graph theory has played an increasingly important role in this line of research.  \citet{fernandes2024positive} proved that the strong connectivity of the associated digraph is equivalent to the existence of a directed Hamiltonian cycle, thereby providing an alternative characterization of efficiency. Later,  \citet{fernandes2024triple} exploited this characterization to establish structural properties of inefficient Perron vectors through the existence of sink vertices in the associated digraph.

When the Perron vector is inefficient, attention naturally shifts to the entire set of efficient vectors. The systematic investigation of this set revealed that different efficient vectors may induce different rankings of the alternatives, as detailed in \citet{furtado2024efficient}. Consequently, selecting an appropriate efficient representative becomes a nontrivial optimization problem. In subsequent work \citet{furtado2024efficiency} proposed embedding the original reciprocal matrix into a larger reciprocal matrix possessing an efficient Perron vector, while \cite{fernandes2026} further analysed this construction. More recently, \citet{szadoczki2024geometric} investigated the geometry of efficient vectors in dimensions three and four, revealing additional structural properties of the efficient region.

Despite these significant advances, several aspects of the graph-theoretical structure underlying efficient and inefficient priority vectors remain poorly understood. In particular, the condensation graph of the digraph associated with a reciprocal matrix and its Perron vector does not receive attention, despite its potential to reveal the global organization of strongly connected components and to provide additional information on the structure of efficient vectors.

The main contributions of this paper are as follows.

First, we investigate the condensation graph associated with the digraph of a reciprocal matrix and its Perron vector, introducing this graph as a new object in the study of efficient priority vectors.

Second, we establish new structural properties of these condensation graphs, extending existing graph-theoretical characterizations of efficient and inefficient Perron vectors.

Third, we derive new theoretical results that clarify the relationship between strongly connected components, condensation graphs and Pareto efficiency, thereby providing a deeper understanding of the mathematical structure of reciprocal matrices.

These contributions strengthen the graph-theoretical approach to the analysis of reciprocal matrices and further illuminate the role of efficiency in priority derivation.

The remainder of the paper is organized as follows. Section 2 presents formal definitions and known results relevant to this work. It also includes remarks on the condensation graph of the digraph associated with a reciprocal matrix and a positive vector. Section 3 studies the structure of this condensation digraph.
In Section 4, we construct a reciprocal matrix from a given one with an inefficient vector by modifying a single pair of reciprocal entries, in such a way that the vector becomes efficient in the new matrix. Section 5 focuses on embedding a reciprocal matrix and an inefficient vector into a larger reciprocal matrix and a corresponding efficient vector using condensation graphs. Finally, Section 6 describes a procedure for obtaining an efficient vector from an inefficient one.

\section{Preliminaries}

\hspace{3ex}
A matrix $A=[a_{ij}]\in M_n$, where $M_n$ denotes the set of all $n\times n$ real matrices, is called a \emph{reciprocal matrix} (or \emph{pairwise comparison matrix}) if all its entries are positive and satisfy
\[
a_{ij}=\frac{1}{a_{ji}}, \qquad i,j=1,\ldots,n.
\]
In particular, $a_{ii}=1$ for every $i=1,\ldots,n$. The entry $a_{ij}$ represents the relative dominance or preference of alternative (or criterion) $i$ over alternative (or criterion) $j$ with respect to a fixed criterion (or decision goal). We denote by ${\cal PC}_n$ the set of all reciprocal matrices of order $n$.

A matrix $A\in{\cal PC}_n$ is said to be \emph{consistent} (or \emph{transitive}) if
\[
a_{ij}a_{jk}=a_{ik},\qquad i,j,k=1,\ldots,n.
\]
It is well known that every consistent matrix admits the factorization
\[
A=ww^{-1},
\]
where $w=\left[w_1,\ldots,w_n\right]^T$ is a positive vector and
\begin{equation}
w^{-1}=\left[w_1^{-1},\ldots,w_n^{-1}\right].
\label{w-1}
\end{equation}
Following \cite{saaty1977scaling}, the priority vector associated with a reciprocal matrix is usually taken as the normalized positive eigenvector corresponding to its largest eigenvalue. Every matrix in ${\cal PC}_2$ is consistent, whereas for $n>2$ consistency is not guaranteed.

When $A\in{\cal PC}_n$ is inconsistent, \cite{blanquero2006inferring} introduced the notion of \emph{efficiency} for priority vectors. A positive vector
\[
w=\left[w_1,\ldots,w_n\right]^T
\]
is said to be \emph{efficient} for $A=[a_{ij}]\in{\cal PC}_n$ if there exists no positive vector
\[
w'=\left[w'_1,\ldots,w'_n\right]^T
\]
such that
\[
\left|a_{ij}-\frac{w'_i}{w'_j}\right|
\le
\left|a_{ij}-\frac{w_i}{w_j}\right|,
\qquad 1\le i,j\le n,
\]
with strict inequality holding for at least one pair $(i,j)$. Otherwise, $w$ is called \emph{inefficient}.

An important connection between efficiency and graph theory was established by \cite{blanquero2006inferring}. They proved that a vector $w$ is efficient for $A\in{\cal PC}_n$ if and only if the associated digraph
\[
G_{A,w}=(V,E),
\]
where $V=\{1,\ldots,n\}$ and
\[
E=
\left\{
(i,j):
\frac{w_i}{w_j}\ge a_{ij},
\ i\neq j
\right\},
\]
is strongly connected.

Observe that whenever
\[
\frac{w_i}{w_j}=a_{ij},
\]
both arcs $(i,j)$ and $(j,i)$ belong to $G_{A,w}$. Moreover, for every pair of distinct vertices $i$ and $j$, at least one of the arcs $(i,j)$ or $(j,i)$ belongs to $G_{A,w}$. Hence, $G_{A,w}$ is a semicomplete digraph.

The \emph{condensation graph} of $G_{A,w}$, denoted by $CG_{A,w}$, is the digraph whose vertices are the strongly connected components of $G_{A,w}$, say
\[
S_1,S_2,\ldots,S_p.
\]
There is an arc from $S_i$ to $S_j$ ($i\neq j$) whenever there exist vertices $\ell$ of $S_i$ and $k$ of $S_j$ such that $(\ell,k)\in E$. 

Since $G_{A,w}$ is a semicomplete digraph, if  the arc  $(S_i,S_j)$ ($i\neq j$) belongs to $CG_{A,w}$, then for all vertices $\ell$ of $S_i$ and $k$ of $S_j$ the arc $(\ell,k)$ belongs to $G_{A,w}$. Moreover, the  condensation graph $CG_{A,w}$ is a transitive tournament. Consequently, $CG_{A,w}$ has a unique source vertex and a unique sink vertex.

Finally, by the Perron--Frobenius theorem for positive matrices \citep{johnson1985matrix}, the largest eigenvalue $r$ of a matrix $A\in{\cal PC}_n$, called the \emph{Perron eigenvalue}, is simple and positive, and every eigenvector associated with $r$ has nonzero entries of the same sign. Moreover,
\[
r\geq n.
\]

\section{On the Structure of $CG_{A,w}$}

\hspace{3ex}Let $w$ be the Perron vector of a reciprocal matrix $A$. If $w$ is efficient then the condensation digraph $CG_{A,w}$ consists of a single vertex. Otherwise, the  digraph $CG_{A,w}$ has more than one vertex. Understanding the structure of its source vertex provides insight into the fact that the associated digraph $G_{A,w}$
 has no source vertex, a result established in \cite{fernandes2026}. The next theorem explains this phenomenon.

\begin{theorem}\label{p1}  Let $A\in {\cal PC}_n$, with $n\geq 3$. Let $w$ be the Perron vector of $A$ and let $G_{A,w}$ be the associated digraph with $A$ and $w$. If $w$ is inefficient for $A$ then the source vertex of $CG_{A,w}$ is a strongly connected component of $G_{A,w}$ with at least three vertices.
\end{theorem}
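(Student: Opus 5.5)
The plan is to write the Perron eigen-equation $Aw=rw$ row by row over the vertices of the source component, and to show that it cannot hold when that component has one or two vertices. Let $S$ be the source vertex of $CG_{A,w}$, viewed as a set of vertices of $G_{A,w}$, and write $s=|S|$. Since $w$ is inefficient, $G_{A,w}$ is not strongly connected, so $CG_{A,w}$ has more than one vertex and $S^c=\{1,\ldots,n\}\setminus S$ is nonempty.

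The first step translates ``$S$ is the source'' into inequalities. As observed in the preliminaries, $CG_{A,w}$ is a transitive tournament, so every other component receives an arc from $S$. Hence for all $i\in S$ and $j\in S^c$ the arc $(i,j)$ lies in $G_{A,w}$, while $(j,i)$ does not, since otherwise $j$ would lie in the strong component of $i$. The absence of $(j,i)$ means $w_j/w_i<a_{ji}=1/a_{ij}$, that is,
\[
a_{ij}w_j<w_i \qquad (i\in S,\ j\in S^c).
\]

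The second step uses the eigen-equation. For $i\in S$,
\[
rw_i=\sum_{j\in S}a_{ij}w_j+\sum_{j\in S^c}a_{ij}w_j<\sum_{j\in S}a_{ij}w_j+(n-s)w_i .
\]
Dividing by $w_i$ and using $r\ge n$ gives
\[
\sum_{j\in S}a_{ij}\frac{w_j}{w_i}>r-n+s\ge s .
\]
Summing over $i\in S$ then yields
\[
\sum_{i,j\in S}a_{ij}\frac{w_j}{w_i}>s^2 .
\]

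The third step rules out $s=1$ and $s=2$. If $s=1$, the left-hand side equals $a_{ii}=1=s^2$, a contradiction. If $s=2$, say $S=\{i,k\}$, then strong connectivity of $S$ forces both arcs $(i,k)$ and $(k,i)$ to be present. This gives $w_i/w_k\ge a_{ik}$ and $w_k/w_i\ge a_{ki}=1/a_{ik}$, so $w_i/w_k=a_{ik}$. Setting $x=a_{ik}w_k/w_i=1$, the left-hand side is $2+x+1/x=4=s^2$, again a contradiction. Therefore $s\ge3$.

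The only delicate point is getting the strict inequality in the first step right. It rests on the fact that every arc of $CG_{A,w}$ between $S$ and another component goes out of $S$ and is never reciprocated, which uses semicompleteness of $G_{A,w}$ together with the source property. The rest is routine bookkeeping with the eigen-equation.
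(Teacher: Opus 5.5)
Your proof is correct and follows the paper's argument. The source property gives $a_{ij}w_j<w_i$ for $i\in S$, $j\notin S$; combined with the rows of $Aw=rw$ and $r\ge n$, this rules out $|S|=1$ and $|S|=2$, and in the latter case strong connectivity forces $w_i=a_{ik}w_k$. The only cosmetic difference is that you sum the row inequalities over all of $S$, whereas the paper works with a single row $\ell\in S$, which already suffices.
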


\begin{proof}  Let $w=\left[
\begin{array}
[c]{ccc}%
w_{1} & \cdots & w_{n}%
\end{array}
\right]  ^{T}$ and let $A=[a_{ij}]$. Let $r$ be the Perron eigenvalue of $A$. Let $S_1$ be the source vertex of $CG_{A,w}$ and let $\ell$ be a vertex of $S_1$. Since $S_1$ is a source vertex of $CG_{A,w}$, for every vertex
$i\not\in S_1$, $\frac{w_\ell}{w_i}> a_{\ell i}.$ This implies that $w_\ell > a_{\ell i} w_i.$ From the $\ell$th row of $Aw=rw$ we obtain $$\sum_{j =1}^n a_{\ell j}w_j=rw_\ell.$$

If $S_1$ has only the vertex $\ell$ then $$0=\sum_{j =1,\ j\neq \ell}^n a_{\ell j}w_j+(1-r)w_\ell<(n-1)w_\ell+(1-r)w_\ell=(n-r)w_\ell.$$ Therefore, $0< (n-r)w_\ell$ which  is impossible because $r\geq n$ and $w_\ell >0$.

If $S_1$ has only two vertices $\ell$ and $h$ then $(\ell ,h)\in E$ and $(h,\ell )\in E$. This implies that $\frac{w_\ell}{w_h}\geq a_{\ell h}$ and $\frac{w_h}{w_\ell}\geq a_{h\ell }$. Since $a_{h\ell }=\frac{1}{a_{\ell h}}$, we get $w_\ell =a_{\ell h} w_h$. Therefore,
$$0=\sum_{j =1,\ j\neq \ell,\ j\neq h}^n a_{\ell j}w_j+(1-r)w_\ell+a_{\ell h}w_h<(n-2)w_\ell+(1-r)w_\ell+w_\ell=(n-r)w_\ell.$$ Again, $0< (n-r)w_\ell$ which  is impossible because $r\geq n$ and $w_\ell >0$.

Consequently, $S_1$ has at least three vertices.
\end{proof}

\vspace{1ex}

Theorem \ref{p1} guarantees that only the source  vertex of $CG_{A,w}$ must contain at least three vertices. The following example shows that no similar restriction holds for the remaining strongly connected components of $G_{A,w}$.
The matrix $$A_1=\left[\begin{array}{ccccc}1&1.1&0.7&4.5&0.5\\ \frac{1}{1.1}&1&1.5&0.8&1.1\\[1ex] \frac{1}{0.7}&\frac{1}{1.5}&1&1.1&1.7\\[1ex] \frac{1}{4.5}&\frac{1}{0.8}&\frac{1}{1.2}&1&5\\[1ex] \frac{1}{0.5}&\frac{1}{1.1}&\frac{1}{1.7}&\frac{1}{5}&1\end{array}\right]\in {\cal PC}_{5}$$ has the Perron vector $w_1=\left[\begin{array}{ccccc} 1&0.624&0.726&0.856&0.593\end{array}\right]^T$. In this case, the source vertex of $CG_{A_1,w_1}$
 corresponds to the strongly connected component of $G_{A_1,w_1}$ containing vertices $1,4$, and $5$, the sink vertex corresponds to the singleton strongly connected component $\{2\}$, and vertex $3$ forms another singleton strongly connected component.
 \vspace{1ex}

Theorem \ref{p1} has several immediate consequences concerning the structure of $G_{A,w}$.
As an immediate consequence, we recover a result of \cite{fernandes2026}.

\begin{corollary} Let $A\in {\cal PC}_n$, with $n\geq 3$ and let $w$ be its inefficient Perron vector. Then the digraph $G_{A,w}$ has no source vertex.
\end{corollary}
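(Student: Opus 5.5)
The plan is to deduce the corollary directly from Theorem \ref{p1}, using the notion of a source vertex in $G_{A,w}$: a vertex $v$ with no incoming arcs, i.e.\ $(i,v)\notin E$ for every $i\neq v$. I will argue by contradiction. Suppose $v$ is a source vertex of $G_{A,w}$. The first step is to show that $\{v\}$ is a strongly connected component. Since no other vertex can reach $v$, $v$ lies on no cycle of length at least two. Hence the component of $G_{A,w}$ containing $v$ is exactly $\{v\}$.

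The second step is to identify $\{v\}$ as the source vertex of $CG_{A,w}$. Because $G_{A,w}$ is semicomplete, for each $i\neq v$ at least one of $(i,v)$ and $(v,i)$ lies in $E$. Since $(i,v)\notin E$, we get $(v,i)\in E$ for all $i\neq v$. So in $CG_{A,w}$ the vertex $\{v\}$ has arcs to every other component and receives none. It is therefore a source there, and by the transitive tournament structure noted in the Preliminaries it is the unique source of $CG_{A,w}$.

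The third step uses inefficiency. Since $w$ is inefficient, $G_{A,w}$ is not strongly connected by the characterization of \cite{blanquero2006inferring}, so $CG_{A,w}$ has more than one vertex and Theorem \ref{p1} applies. It says the source vertex of $CG_{A,w}$ is a component with at least three vertices. This contradicts the fact that the source is $\{v\}$, which has one vertex. Hence $G_{A,w}$ has no source vertex.

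There is no real obstacle. The only point needing care is the second step, where semicompleteness is what forces every arc between $v$ and another vertex to point out of $v$. If instead the source vertex of a digraph is meant in the weaker sense of a vertex from which all others are reachable, the corollary would be false, since such vertices exist in $S_1$. I therefore read the statement in the standard in-degree-zero sense.
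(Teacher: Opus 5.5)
Your argument is correct and follows the paper's route: the paper states this corollary as an immediate consequence of Theorem~\ref{p1}, since a vertex of in-degree zero in $G_{A,w}$ would be a singleton strongly connected component forming the source of $CG_{A,w}$, contradicting the bound of at least three vertices. Your reading of ``source'' as in-degree zero is the right one, and the contradiction you derive is exactly the singleton case already handled in the proof of Theorem~\ref{p1}.
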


Another important consequence of the previous theorem is the fact that every matrix in ${\cal PC}_n$, with $n\leq 3$ has efficient Perron vector.

\begin{corollary} Let $A\in {\cal PC}_3$ and let $w$ be its Perron vector. Then $w$ is efficient for $A$.
\end{corollary}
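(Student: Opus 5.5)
The plan is to argue by contradiction, using Theorem \ref{p1} to force the whole vertex set into a single strongly connected component. Suppose $w$ is inefficient for $A\in{\cal PC}_3$. By the characterization of \cite{blanquero2006inferring}, $G_{A,w}$ is then not strongly connected, so $CG_{A,w}$ has at least two vertices. Let $S_1$ be its source vertex and $S_p$ its sink vertex. These exist and are distinct because $CG_{A,w}$ is a transitive tournament with more than one vertex.

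Theorem \ref{p1} applies since $n=3\geq 3$. It says that $S_1$ is a strongly connected component with at least three vertices. But $G_{A,w}$ has exactly three vertices, so $S_1=\{1,2,3\}$. Then $S_1$ is the only strongly connected component, and $CG_{A,w}$ has a single vertex. This contradicts the existence of the distinct component $S_p$.

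Hence $G_{A,w}$ is strongly connected, and by the result of \cite{blanquero2006inferring} the Perron vector $w$ is efficient for $A$.

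There is no real obstacle beyond checking the hypotheses. The only point needing care is that Theorem \ref{p1} requires $w$ to be the Perron vector and $n\geq 3$, and both hold here. If one wanted to avoid relying on the theorem, one could redo its row-sum estimate directly: a singleton or two-vertex source component would give $0<(n-r)w_\ell\le 0$ from $r\geq n$. For $n=3$ this is exactly the content of Theorem \ref{p1}, so invoking it suffices.
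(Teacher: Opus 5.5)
Your proof is correct and follows the paper's route: the paper states this corollary as an immediate consequence of Theorem~\ref{p1}. For $n=3$, a source component with at least three vertices must be the whole vertex set, so $G_{A,w}$ is strongly connected, which contradicts inefficiency.
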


\begin{corollary} Let $A\in {\cal PC}_4$ and let $w$ be its Perron vector. If $w$ is inefficient for $A$ then the digraph $G_{A,w}$ has a sink vertex.
\end{corollary}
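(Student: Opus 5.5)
Assume $w$ is inefficient for $A\in{\cal PC}_4$ and use Theorem \ref{p1} to read off the shape of $CG_{A,w}$. Since $w$ is inefficient, $G_{A,w}$ is not strongly connected, so $CG_{A,w}$ has at least two vertices, namely the source $S_1$ and the sink $S_p$ with $S_1\neq S_p$. By Theorem \ref{p1}, the source component $S_1$ has at least three vertices. The components partition the vertex set $\{1,2,3,4\}$, so $S_1$ has exactly three vertices. The remaining single vertex, say $k$, then forms the only other component, so $p=2$ and the sink is $S_2=\{k\}$.

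Next translate this back to $G_{A,w}$. As noted in the Preliminaries, $G_{A,w}$ is semicomplete and $(S_1,S_2)$ is an arc of $CG_{A,w}$. Hence for every $\ell\in S_1$ the arc $(\ell,k)$ belongs to $E$. Since $S_2$ is the sink, no arc $(k,\ell)$ with $\ell\in S_1$ is present, for otherwise $k$ and $\ell$ would lie in the same strong component. So $k$ has out-degree zero and in-degree three in $G_{A,w}$, which makes $k$ a sink vertex of $G_{A,w}$. Equivalently, $\frac{w_\ell}{w_k}>a_{\ell k}$ holds strictly for all $\ell\neq k$.

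There is no real obstacle here. The one point to check is that the lone vertex outside $S_1$ has no outgoing arcs, and this follows from its being the sink component. The arc $(k,\ell)$ cannot lie in $E$ without merging the two components, which would contradict the inefficiency of $w$.
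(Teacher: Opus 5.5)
Your proposal is correct and matches the paper's argument, which the paper leaves implicit as an immediate consequence of Theorem~\ref{p1}. Since there are at least two components, the source component has exactly three of the four vertices, so the sink component is a singleton $\{k\}$ with no outgoing arcs, that is, a sink vertex of $G_{A,w}$.
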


 The assumption that $w$ is the Perron vector is essential. This is illustrated by the example in
\cite{fernandes2026} with the matrix $A=\left[\begin{array}{ccc}1&1&2\\ 1&1&1\\ \frac{1}{2}&1&1\end{array}\right]\in {\cal PC}_{3}$ and the vector $w=\left[\begin{array}{ccc} 1&2&3\end{array}\right]^T$. In this case, the vertex $3$ is the source vertex of $G_{A,w}$.

\section{The effect of a single perturbation on a reciprocal matrix}

\hspace{3ex}This section proves that every reciprocal matrix for which a given vector is inefficient can be transformed into a reciprocal matrix for which the same vector is efficient by modifying only a single pair of reciprocal entries.

\begin{theorem} \label{t2} Let $A\in {\cal PC}_n$, with $n\geq 4$. If the positive vector $w$ is inefficient for $A$, then there exists $A'\in {\cal PC}_n$ obtained from $A$ by modifying a single entry and its reciprocal, such that $w$ is efficient for $A'$.
\end{theorem}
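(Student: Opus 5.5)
The plan is to use the structure of the condensation digraph $CG_{A,w}$ described in the Preliminaries. I would add a single ``back arc'' from the sink component to the source component. Changing only the pair $a_{\ell k},a_{k\ell}$ can only affect the arcs between $\ell$ and $k$ in $G_{A,w}$. So I will choose $\ell$ in the sink and $k$ in the source, and make the modified entry match $w$ exactly.

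\emph{Setup.} Since $w$ is inefficient for $A$, the result of \cite{blanquero2006inferring} shows that $G_{A,w}$ is not strongly connected. Hence $CG_{A,w}$ has $p\ge 2$ vertices. As observed in Section 2, $CG_{A,w}$ is a transitive tournament. Order its vertices $S_1,\dots,S_p$ so that $(S_i,S_j)$ is an arc whenever $i<j$; then $S_1$ is the source and $S_p$ is the sink. Moreover, for $i<j$ every vertex of $S_i$ has an arc to every vertex of $S_j$. Fix $k\in S_1$ and $\ell\in S_p$. Then $(k,\ell)\in E$ but $(\ell,k)\notin E$, that is, $\frac{w_k}{w_\ell}>a_{k\ell}$.

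\emph{Construction.} Define $A'$ from $A$ by setting $a'_{\ell k}=\frac{w_\ell}{w_k}$ and $a'_{k\ell}=\frac{w_k}{w_\ell}$, leaving all other entries unchanged. Then $A'\in{\cal PC}_n$, since all entries are positive and reciprocity holds. For every pair $\{i,j\}\neq\{k,\ell\}$ the arcs between $i$ and $j$ are the same in $G_{A',w}$ as in $G_{A,w}$. For the pair $\{k,\ell\}$, the equality $\frac{w_\ell}{w_k}=a'_{\ell k}$ puts both $(k,\ell)$ and $(\ell,k)$ in $G_{A',w}$. So $G_{A',w}$ is $G_{A,w}$ with the single arc $(\ell,k)$ added.

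\emph{Strong connectivity.} Let $v$ be any vertex, say $v\in S_i$.
\begin{itemize}
\item \emph{Reaching $\ell$ from $v$.} If $i<p$, the arc $(v,\ell)$ exists directly. If $i=p$, then $v$ reaches $\ell$ inside the strongly connected component $S_p$.
\item \emph{Reaching $v$ from $k$.} If $i>1$, the arc $(k,v)$ exists directly. If $i=1$, then $k$ reaches $v$ inside $S_1$.
\end{itemize}
Combining these with the new arc $(\ell,k)$, every vertex reaches every other vertex via $v\to\ell\to k\to u$. Hence $G_{A',w}$ is strongly connected, and by \cite{blanquero2006inferring} $w$ is efficient for $A'$.

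The only point needing care is to check that the modification does not destroy arcs used in these paths. The arc $(k,\ell)$ is kept because equality gives both directions, and no other pair is touched. I do not expect a genuine obstacle. The argument seems not to need $n\ge 4$ beyond guaranteeing that inefficient vectors exist at all, since every matrix in ${\cal PC}_2$ is consistent.
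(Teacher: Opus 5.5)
Your proposal is correct and follows the paper's proof. You make the same modification $a'_{k\ell}=w_k/w_\ell$, $a'_{\ell k}=w_\ell/w_k$ with $k$ in the source and $\ell$ in the sink component, and you observe, as the paper does, that this only adds the back arc $(\ell,k)$; your $v\to\ell\to k\to u$ argument spells out the paper's claim that this arc closes a cycle through all of $CG_{A,w}$. One small correction to your closing aside: inefficient vectors already exist for $n=2$ (any $w$ not proportional to the consistent vector of $A$), and since inefficiency is a hypothesis, your argument works verbatim for every $n\ge 2$.
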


\begin{proof}  Let $w=\left[
\begin{array}
[c]{ccc}%
w_{1} & \cdots & w_{n}%
\end{array}
\right]  ^{T}$ and $A=[a_{ij}]$. Let $S_1$ and $S_p$ denote, respectively, the source and sink vertices of $CG_{A,w}$. Choose vertices $s_1\in S_1$ and $s_p\in S_p$.  Recall that the arc $(s_1,s_p)$ belongs to $G_{A,w}$ whereas $(s_p,s_1)$ does not.

Let $A'=[a'_{ij}]$ be obtained from $A$ by setting  $$a'_{s_1s_p}=\frac{w_{s_1}}{w_{s_p}}\ ,\ a'_{s_ps_1}=\frac{w_{s_p}}{w_{s_1}}$$ and leaving all the remaining entries unchanged.

The digraph $G_{A',w}$ contains all the arcs of $G_{A,w}$  together with the new arc $(s_p,s_1)$. The new arc creates a directed cycle that passes through every vertex of $CG_{A,w}$.
 Hence the condensation graph collapses into a single strongly connected component, implying that $G_{A',w}$ is strongly connected and $w$ is efficient for $A'$.
\end{proof}

As an immediate consequence of Theorem \ref{t2}, if $w$ is the inefficient Perron vector of $A$, then there exists a reciprocal matrix obtained from $A$ by modifying only one pair of reciprocal entries for which the Perron vector is efficient.

\begin{example}\label{xx1} Consider the following reciprocal matrix $$A=\left[\begin{array}{cccc}1&1.2&4&9\\ &&&\\ \frac{1}{1.2}&1&7&5\\ &&&\\ \frac{1}{4}&\frac{1}{7}&1&4\\ &&&\\ \frac{1}{9}&\frac{1}{5}&\frac{1}{4}&1\end{array}\right].$$ The Perron vector of $A$ is $w=\left[
\begin{array}
[c]{cccc}%
1 & 0.99&0.26 & 0.11%
\end{array}
\right]  ^{T}$, which is inefficient since the associated digraph $G_{A,w}$ is not strongly connected

\vspace{-8ex}
 \begin{picture}(400,150)(-130,0)

\put(10,60){\circle*{6}}
\put(40,60){\circle*{6}}
 \put(10,90){\circle*{6}}
 \put(40,90){\circle*{6}}

\put(10,55){\makebox(0,0)[t]{$1$}}
\put(40,55){\makebox(0,0)[t]{$2$}}
\put(40,102){\makebox(0,0)[t]{$3$}}
\put(10,102){\makebox(0,0)[t]{$4$}}

  \put(10,60){\line(1,0){30}}
  \put(40,60){\vector(-1,0){18}}
  \put(10,60){\line(0,1){30}}
   \put(10,90){\vector(0,-1){18}}
   \put(10,60){\line(1,1){30}}
   \put(40,90){\vector(-1,-1){24}}
    \put(40,60){\line(-1,1){30}}
    \put(40,60){\vector(-1,1){24}}
     \put(40,60){\line(0,1){30}}
      \put(40,90){\vector(0,-1){18}}
      \put(10,90){\vector(1,0){18}}
    \put(10,90){\line(1,0){30}}

\end{picture}
\vspace{-13ex}

Applying Theorem \ref{t2}, we obtain that each of the following reciprocal matrices has $w$ as an efficient vector:
$$A_1=\left[\begin{array}{cccc}1&{\bf 1.01}&4&9\\ &&&\\ {\bf \frac{1}{1.01}}&1&7&5\\ &&&\\ \frac{1}{4}&\frac{1}{7}&1&4\\ &&&\\ \frac{1}{9}&\frac{1}{5}&\frac{1}{4}&1\end{array}\right], \hspace{2ex} A_2=\left[\begin{array}{cccc}1&1.2&{\bf 3.84}&9\\ &&&\\ \frac{1}{1.2}&1&7&5\\ &&&\\ {\bf \frac{1}{3.84}}&\frac{1}{7}&1&4\\ &&&\\ \frac{1}{9}&\frac{1}{5}&\frac{1}{4}&1\end{array}\right],$$
\vspace{1ex}

$$A_3=\left[\begin{array}{cccc}1&1.2&4&{\bf 8.72}\\ &&&\\ \frac{1}{1.2}&1&7&5\\ &&&\\ \frac{1}{4}&\frac{1}{7}&1&4\\ &&&\\ {\bf \frac{1}{8.72}}&\frac{1}{5}&\frac{1}{4}&1\end{array}\right].$$
\end{example}

When $w$ is the Perron vector of $A$ and is inefficient, Theorem \ref{p1} implies that the source vertex of $CG_{A,w}$ contains at least three vertices of $G_{A,w}$.
Consequently, as illustrated in the previous example, there are at least three reciprocal matrices that can be obtained from $A$.
Among all matrices obtained through Theorem \ref{t2}, we seek one that is closest to the consistent matrix $ww^T$.

To this end, consider the Frobenius norm of the matrix $(A'-ww^T)=[c_{ij}]$, namely, $$||A'-ww^T||_F=\left(\sum_{i,j=1,\ldots ,n}(c_{ij})^2\right)^{\frac{1}{2}}$$

The following theorem identifies the perturbation that minimizes the Frobenius distance to the consistent matrix $ww^{T}$.

\begin{theorem} \label{t33} Let $A\in {\cal PC}_n$, with $n\geq 4$, and let $w=\left[
\begin{array}
[c]{ccc}%
w_{1} & \cdots & w_{n}%
\end{array}
\right]  ^{T}$ be an inefficient vector for $A$. Let $S_1$ and $S_p$ denote, respectively, the source  and  sink vertices of $CG_{A,w}$.
Suppose that $i\in S_1$ and $j\in S_p$ satisfy $$\left(a_{ij}-\frac{w_i}{w_j}\right)^2+\left(a_{ji}-\frac{w_j}{w_i}\right)^2=\max_{ h\in S_1, \ l\in S_p}\left\{\left(a_{hl}-\frac{w_h}{w_l}\right)^2+\left(a_{lh}-\frac{w_l}{w_h}\right)^2\right\}.$$
Let $A'=[a'_{tr}]\in {\cal PC}_n$ be obtained from $A$ by setting  $$a'_{ij}=\frac{w_{i}}{w_{j}}\ ,\ a'_{ji}=\frac{w_{j}}{w_{i}}$$ and leaving all the remaining entries unchanged. Then $A'$ minimizes the Frobenius distance to $ww^{T}$ among all matrices obtained from $A$ as in Theorem \ref{t2}; that is,  $$||A'-ww^T||_F=\min\{||B-ww^T||_F:\ B\mbox{ is obtained from $A$ as in Theorem \ref{t2}}\}.$$
\end{theorem}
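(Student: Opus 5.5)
The plan is a direct computation of the squared Frobenius norm. Every matrix produced by Theorem \ref{t2} has the same form: a pair $h\in S_1$, $l\in S_p$ is chosen, $a_{hl}$ and $a_{lh}$ are replaced by $\frac{w_h}{w_l}$ and $\frac{w_l}{w_h}$, and all other entries are unchanged. I will write $B_{h,l}$ for this matrix and denote the candidate family by
\[
\mathcal{F}=\{B_{h,l}:\ h\in S_1,\ l\in S_p\}.
\]
Since $\|\cdot\|_F^2$ is a sum of squares of entries, and $\|\cdot\|_F$ is monotone in $\|\cdot\|_F^2$, I will compare $\|B_{h,l}-C\|_F^2$ with $\|A-C\|_F^2$. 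Here $C$ is the consistent matrix determined by $w$, namely $C=[c_{tr}]$ with $c_{tr}=\frac{w_t}{w_r}$, which is $ww^{-1}$ in the notation of (\ref{w-1}).

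The key step is that $B_{h,l}-C$ and $A-C$ differ only in positions $(h,l)$ and $(l,h)$. Therefore
\[
\|B_{h,l}-C\|_F^2=\|A-C\|_F^2-\left[\left(a_{hl}-\frac{w_h}{w_l}\right)^2+\left(a_{lh}-\frac{w_l}{w_h}\right)^2\right]+\left[\left(\frac{w_h}{w_l}-c_{hl}\right)^2+\left(\frac{w_l}{w_h}-c_{lh}\right)^2\right].
\]
Because $c_{hl}=\frac{w_h}{w_l}$ and $c_{lh}=\frac{w_l}{w_h}$, the last bracket vanishes. Hence $\|B_{h,l}-C\|_F^2$ equals the constant $\|A-C\|_F^2$ minus the quantity
\[
\left(a_{hl}-\frac{w_h}{w_l}\right)^2+\left(a_{lh}-\frac{w_l}{w_h}\right)^2.
\]
Minimizing the distance over $\mathcal{F}$ is then the same as maximizing this quantity over $h\in S_1$, $l\in S_p$. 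By hypothesis the maximum is attained at $(i,j)$, so $A'=B_{i,j}$ is a minimizer, which gives the claimed equality. Theorem \ref{t2} is used only to know that each $B_{h,l}$ is an admissible matrix for which $w$ is efficient.

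The main obstacle is notational rather than mathematical. The statement writes the target as $ww^T$, whose entries are $w_tw_r$, but the maximized expression involves $\frac{w_h}{w_l}$. The cancellation above works only when the target's $(h,l)$ entry equals the new value $\frac{w_h}{w_l}$. So I will read $ww^T$ as the consistent matrix $ww^{-1}$ of Section 2, the natural "closest consistent matrix" for $w$.

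If the literal $ww^T$ were intended, the same expansion would still apply, but the added bracket would be $\left(\frac{w_h}{w_l}-w_hw_l\right)^2+\left(\frac{w_l}{w_h}-w_hw_l\right)^2$. That term is not zero, and the stated maximization criterion would not suffice in general. I will therefore make the interpretation explicit at the start of the proof and then carry out the three steps: describe $\mathcal{F}$, establish the entrywise identity, and reduce to the hypothesis on $(i,j)$.
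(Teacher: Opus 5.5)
Your argument is correct and follows the paper's route. Each candidate $B_{h,l}$ differs from $A$ only in the pair $(h,l),(l,h)$, so its squared distance is a common constant minus $\left(a_{hl}-\frac{w_h}{w_l}\right)^2+\left(a_{lh}-\frac{w_l}{w_h}\right)^2$, and the maximizing pair $(i,j)$ gives the minimizer. Your reading of $ww^T$ as the consistent matrix $ww^{-1}$ (entries $w_t/w_r$) is what the paper's terser proof tacitly assumes, since only then do the modified entries of $A'-ww^T$ vanish and the comparison reduce to the stated maximum.
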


\begin{proof} Suppose that $B=[b_{tr}]$ is a matrix obtained from $A$ by setting  $$b_{hl}=\frac{w_{h}}{w_{l}}\ ,\ b_{lh}=\frac{w_{l}}{w_{h}},$$ with $h\in S_1$ and $l\in S_p$ and leaving all the remaining entries unchanged.

 Since $A'$ and $B$ differ from $A$ in exactly one reciprocal pair of entries, all the terms in the Frobenius norm are identical except those corresponding to the modified entries. Hence, $$\left(a_{ij}-\frac{w_i}{w_j}\right)^2+\left(a_{ji}-\frac{w_j}{w_i}\right)^2\geq \left(a_{hl}-\frac{w_h}{w_l}\right)^2+\left(a_{lh}-\frac{w_l}{w_h}\right)^2,$$ we conclude that $$||A'-ww^T||_F^2\leq ||B-ww^T||_F^2.$$ Therefore, the result follows.
\end{proof}

\begin{example} Using Example \ref{xx1}, we have $$\left(a_{12}-\frac{w_1}{w_2}\right)^2+\left(a_{21}-\frac{w_2}{w_1}\right)^2=\left(1.2-\frac{1}{0.99}\right)^2+\left(\frac{1}{1.2}-0.99\right)^2=0.061,$$
$$\left(a_{13}-\frac{w_1}{w_3}\right)^2+\left(a_{31}-\frac{w_3}{w_1}\right)^2=\left(4-\frac{1}{0.26}\right)^2+\left(\frac{1}{4}-0.26\right)^2=0.024,$$
$$\left(a_{14}-\frac{w_1}{w_4}\right)^2+\left(a_{41}-\frac{w_4}{w_1}\right)^2=\left(9-\frac{1}{0.11}\right)^2+\left(\frac{1}{9}-0.11\right)^2=0.008.$$ 
Therefore, by Theorem \ref{t33}, the matrix $A_1$ is the reciprocal matrix obtained from $A$ by modifying a single pair of reciprocal entries
that minimizes the Frobenius distance to $ww^T$.
    \end{example}

\section{The condensation digraph and extensions}

\hspace{3ex}Let $C\in {\cal PC}_{n+1}$ and let $i\in\{1,\ldots,n+1\}$. We denote by $C(i)$ the matrix obtained from $C$ by deleting its $i$th row and $i$th column. An {\it extension} of a matrix $A\in {\cal PC}_n$ is a matrix $B\in {\cal PC}_{n+1}$ such that $B(n+1)=A$.

Let $v=\left[
\begin{array}
[c]{cccc}%
v_{1} & \cdots & v_n&v_{n+1}%
\end{array}
\right]  ^{T}$ be a positive vector and let $i\in\{1,\ldots,n+1\}$. We denote by $v(i)$ the vector obtained from $v$ by deleting its $i$th row.  An {\it extension} of a positive vector $w$ with $n$ components is a positive vector $w'$ with $n+1$ components such that $w'(n+1)=w$.

The construction of extensions preserving or improving efficiency properties has recently attracted attention. \citet{fernandes2024triple} considered a particular extension of a triple perturbed reciprocal matrix obtained by appending a row and a column whose entries are all equal to one. They provided an example in which the Perron vector of the original matrix is inefficient, whereas the Perron vector of its extension is efficient. Moreover, their example shows that the pairwise order of the corresponding components of the two Perron vectors may or may not be preserved. 

On the other hand, \citet[Theorem~33]{furtado2024efficiency} proved that every reciprocal matrix with an inefficient Perron vector admits an extension whose Perron vector is efficient. However, their result does not address whether the first $n$ components of the efficient vector of the extension can coincide with the Perron vector of the original matrix. 

In this section, we use the condensation digraph to obtain such extensions. More precisely, starting from a reciprocal matrix with an inefficient Perron vector, we construct an extension of the reciprocal matrix and an extension of the Perron vector, efficient for this matrix.

\begin{theorem} \label{t3} Let $A\in{\cal PC}_n$, with $n\ge4$, and let $w$ be its Perron vector. Then there exists an extension $B$ of $A$ together with an extension   $w'$ of $w$ efficient for $B$.  

Moreover, there exists an extension $w'=\left[
\begin{array}
[c]{cccc}%
w'_{1} & \cdots & w'_{n}&w'_{n+1}%
\end{array}
\right]  ^{T}$ of $w$ such that $$0<w'_{n+1}<\min\{w'_i:\ 1\leq i\leq n\}.$$
\end{theorem}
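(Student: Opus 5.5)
We construct $B$ and $w'$ explicitly, using the condensation digraph $CG_{A,w}$. If $w$ is already efficient for $A$, then $G_{A,w}$ is strongly connected. In that case we take $w'_{n+1}=t$ and $b_{i,n+1}=w_i/t$ for all $i$, so that the new vertex $n+1$ is joined to every vertex in both directions. Then $G_{B,w'}$ is strongly connected and the first claim holds. The interesting case is when $w$ is inefficient. Let $S_1,\dots,S_p$ be the strongly connected components, with $S_1$ the source and $S_p$ the sink of $CG_{A,w}$; recall that $CG_{A,w}$ is a transitive tournament. Fix $s_1\in S_1$ and $s_p\in S_p$.

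The idea is to use the new vertex $n+1$ as a bridge from the sink back to the source, mimicking the proof of Theorem~\ref{t2} without touching $A$. Choose $w'_{n+1}=t$ with
\[
0<t<\min\{w_i:\ 1\le i\le n\}
\]
and $w'(n+1)=w$. Define the last column of $B$ by
\[
b_{s_p,n+1}=\frac{w_{s_p}}{t},\qquad b_{n+1,s_1}=\frac{t}{w_{s_1}},
\]
so both the arc $(s_p,n+1)$ and the arc $(n+1,s_1)$ lie in $G_{B,w'}$ (with equality, these even give 2-cycles). For the remaining $i\notin\{s_1,s_p\}$, we may set $b_{i,n+1}=w_i/t$ as well, so that vertex $n+1$ is adjacent in both directions to every vertex. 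Then set $b_{n+1,i}=1/b_{i,n+1}$ and $b_{n+1,n+1}=1$. By construction $B\in{\cal PC}_{n+1}$ and $B(n+1)=A$, so $B$ is an extension of $A$. Moreover, $G_{B,w'}$ restricted to $\{1,\dots,n\}$ equals $G_{A,w}$, since the arcs there depend only on the entries $a_{ij}$ and on $w$.

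For strong connectivity, it suffices to exhibit a closed walk meeting every component. In $G_{A,w}$ there is a path $s_1\to S_2\to\cdots\to s_p$ visiting all components, since $CG_{A,w}$ is a transitive tournament whose arcs lift to all vertex pairs. Appending $s_p\to n+1\to s_1$ closes this into a cycle. Every vertex of $\{1,\dots,n\}$ lies in some $S_k$ and is thus mutually reachable with the cycle, and $n+1$ lies on the cycle itself. Hence $G_{B,w'}$ is strongly connected, and by the Blanquero et al.\ characterization $w'$ is efficient for $B$. The second claim holds automatically, because $t$ was chosen below the minimum and the construction imposes no further condition on $t$.

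The only points needing care are that the theorem does not require $w$ to be the Perron vector of $B$, only an extension efficient for $B$, so nothing spectral is needed; and that the reciprocity and positivity of the new entries are immediate. I expect no genuine obstacle. The main subtlety is verifying that modifying nothing inside $A$ leaves $G_{A,w}$ intact as an induced subdigraph, which follows directly from the definition of the arc set.
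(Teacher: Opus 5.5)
Your proof is correct, but your choice of the new column differs from the paper's, and that changes what carries the argument. The paper prescribes only $b_{s_1,n+1}=w_{s_1}/w_{n+1}$ and $b_{s_p,n+1}=w_{s_p}/w_{n+1}$ (with their reciprocals) and puts $1$ in every other new off-diagonal position. Its argument uses only the arcs $(n+1,s_1)$ and $(s_p,n+1)$ at the new vertex. It therefore genuinely relies on every vertex being reachable from $s_1\in S_1$ (source) and every vertex reaching $s_p\in S_p$ (sink) in $G_{A,w}$.

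You instead set $b_{i,n+1}=w_i/t$ for every $i\le n$, so $G_{B,w'}$ contains both $(i,n+1)$ and $(n+1,i)$ for all $i$. Any two vertices $u,v$ are then joined by $u\to n+1\to v$, and strong connectivity is immediate. Several parts of your writeup are therefore not needed: the closed walk through $S_1,\dots,S_p$, the case distinction on whether $w$ is efficient, the Perron hypothesis, and the bound $t<\min_i w_i$. The walk argument is valid, just redundant. In your efficient case you should also say explicitly that $t$ may be taken below $\min_i w_i$, so that the second claim is covered.

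What each route buys: yours is more elementary and more general, working for every positive $w$ and every $t>0$. The paper's construction says something sharper about the structure. The source and sink of $CG_{A,w}$ single out two comparisons of the new alternative that suffice to be made consistent with $w'$, while all its other comparisons can be left neutral (equal to $1$).
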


\begin{proof}  Let $w=\left[
\begin{array}
[c]{ccc}%
w_{1} & \cdots & w_{n}%
\end{array}
\right]  ^{T}$ and $A=[a_{ij}]$. 

If $w$ is efficient, then $G_{A,w}$ is strongly connected. Choose two vertices $s_1$ and  $s_p$ of $G_{A,w}$.

If $w$ is inefficient, then $G_{A,w}$ is not strongly connected. Let $CG_{A,w}$ be its condensation digraph, and let $S_1$ and $S_p$ denote, respectively, the source and the sink vertices of $CG_{A,w}$. Choose vertices $s_1\in S_1$ and $s_p\in S_p$.

Let $w_{n+1}$ be any positive real number satisfying \[ w_{n+1}<\min\{w_i:1\le i\le n\}, \] and define the reciprocal matrix $B=[b_{ij}]\in{\cal PC}_{n+1}$ by \[ b_{ij}= \begin{cases} a_{ij}, & 1\le i,j\le n,\\[1mm] \dfrac{w_{s_1}}{w_{n+1}}, & i=s_1,\ j=n+1,\\[3mm] \dfrac{w_{s_p}}{w_{n+1}}, & i=s_p,\ j=n+1,\\[3mm] \dfrac{w_{n+1}}{w_{s_1}}, & i=n+1,\ j=s_1,\\[3mm] \dfrac{w_{n+1}}{w_{s_p}}, & i=n+1,\ j=s_p,\\[3mm] 1, & \text{otherwise}. \end{cases} \] Clearly, $B$ is an extension of $A$. Consider now the vector $$w'=\left[
\begin{array}
[c]{cccc}%
w_{1} & \cdots & w_{n}&w_{n+1}%
\end{array}
\right]  ^{T}.$$ Clearly, $w'$ is an extension of $w$. By construction, \[ \frac{w_{s_1}}{w_{n+1}}=b_{s_1,n+1} \quad\text{and}\quad \frac{w_{s_p}}{w_{n+1}}=b_{s_p,n+1}, \] so the digraph $G_{B,w'}$ contains the arcs $$(n+1, s_1) \quad\text{and}\quad (s_p, n+1). $$

If $w$ is efficient, since $G_{A,w}$ is strongly connected, then there exists a directed path
 from $s_1$ to every vertex, and  from every vertex to $s_p$. 

If $w$ is inefficient, since $S_1$ is a source of the condensation digraph, every vertex of $G_{A,w}$ is reachable from $s_1\in S_1$. Likewise, because $S_p$ is a sink, every vertex of $G_{A,w}$ reaches $s_p\in S_p$. 

Consequently, in both cases, every vertex of $G_{B,w'}$ is reachable from the new vertex $n+1$, and every vertex can reach $n+1$.
Hence $G_{B,w'}$ is strongly connected. Therefore, the vector $w'$ is efficient for $B$.
\end{proof}

\vspace{1ex}

The construction described in Theorem \ref{t3} can be modified so that the additional component of $w$ is assigned an arbitrary positive value.

\section{Efficient vector obtained from an inefficient}

\hspace{3ex}The main result of this section establishes a procedure to transform an inefficient vector into an efficient one for a given reciprocal matrix. Once again, this procedure uses the condensation digraph.

\begin{theorem}\label{t4} Let $A=[a_{ij}]\in {\cal PC}_n$, with $n\geq 4$. Let $w=\left[
\begin{array}
[c]{ccc}%
w_{1} & \cdots & w_{n}%
\end{array}
\right]  ^{T}$  be an inefficient vector for $A$. Let $S_1$ and $S_p$ be, respectively, the source and sink vertices of $CG_{A,w}$. Let $s_1$ be a vertex of $S_1$ and let $s_p$ be a vertex of $S_p$. Define $w'=\left[
\begin{array}
[c]{ccc}%
w'_{1} & \cdots & w'_{n}%
\end{array}
\right]  ^{T}$ such that $$w'_i=\left\{\begin{array}{ll}a_{s_ps_1}w_{s_1}&\mbox{ if }i=s_p\\ w_i&\mbox{ otherwise }\end{array}\right..$$  Then
all vertices of $S_1$ and the vertex $s_p$ belong to the same strongly connected component of $G_{A,w'}$.

Moreover, if $w'$ is still inefficient for $A$, then $s_p$ belongs to the strongly connected component that is the source vertex of $CG_{A,w'}$.
\end{theorem}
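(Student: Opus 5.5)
The plan is to compare $G_{A,w'}$ with $G_{A,w}$ directly. Only the component indexed by $s_p$ changes, so only arcs incident with $s_p$ can differ between the two digraphs.

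\textbf{Step 1: direction of the change.} Since $S_p$ is the sink and $S_1$ is the source of $CG_{A,w}$, and $p>1$ because $w$ is inefficient, the arc $(s_p,s_1)$ is not in $G_{A,w}$. Hence $\frac{w_{s_p}}{w_{s_1}}<a_{s_ps_1}$, that is,
\[
w'_{s_p}=a_{s_ps_1}w_{s_1}>w_{s_p}.
\]
So the weight of $s_p$ strictly increases. The consequences for $G_{A,w'}$ are as follows.
\begin{itemize}
\item Arcs between two vertices different from $s_p$ are unchanged.
\item Out-arcs of $s_p$ can only be gained, since $\frac{w'_{s_p}}{w_j}$ increases.
\item In-arcs of $s_p$ can only be lost.
\item By construction $\frac{w'_{s_p}}{w'_{s_1}}=a_{s_ps_1}$, so both $(s_p,s_1)$ and $(s_1,s_p)$ belong to $G_{A,w'}$.
\end{itemize}

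\textbf{Step 2: first claim.} Because $s_p\notin S_1$, all arcs among the vertices of $S_1$ are the same in $G_{A,w}$ and $G_{A,w'}$. Therefore $S_1$ still induces a strongly connected subdigraph of $G_{A,w'}$. The 2-cycle between $s_1$ and $s_p$ then places $s_p$ in the same strongly connected component $C$ of $G_{A,w'}$ as all of $S_1$.

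\textbf{Step 3: $C$ is the source of $CG_{A,w'}$.} Since $CG_{A,w'}$ is a transitive tournament, it suffices to show that no vertex $i\notin C$ has an arc into $C$. Take such an $i$; then $i\notin S_1$ and $i\neq s_p$. Split into two cases.
\begin{itemize}
\item \emph{Arcs from $i$ to $\ell\in S_1$.} In $G_{A,w}$ we have $\frac{w_\ell}{w_i}>a_{\ell i}$ strictly, because $S_1$ is the source component. This strict inequality involves neither vertex $s_p$, so it persists in $G_{A,w'}$. Hence $(i,\ell)\notin G_{A,w'}$, while $(\ell,i)\in G_{A,w'}$.
\item \emph{Arc $(i,s_p)$.} Suppose $(i,s_p)\in G_{A,w'}$. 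Then $i\to s_p\to s_1\to i$ is a directed cycle in $G_{A,w'}$, using the previous case for the arc $(s_1,i)$. This forces $i\in C$, a contradiction.
\end{itemize}

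The same argument applies to any other vertex of $C$ that might have been added beyond $S_1\cup\{s_p\}$; I would check this carefully. The subtle point is that vertices of $C$ other than $S_1\cup\{s_p\}$ are only reached through $s_p$. So I would argue directly: $C$ consists of the vertices that lie on cycles through $s_1$, and any arc $(i,c)$ with $c\in C$ gives a cycle through $s_1$, since $s_1$ reaches $i$ (directly if $i\notin S_1$) and $c$ reaches $s_1$. Hence $C$ has no incoming arcs from outside, and if $w'$ is inefficient, $C\ni s_p$ is the source of $CG_{A,w'}$.

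The main obstacle is keeping track of which strict inequalities survive the change of $w_{s_p}$. The monotonicity in Step 1 handles this.
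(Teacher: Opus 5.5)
Your proof is correct and follows the paper's route. The first claim comes from the 2-cycle $s_1\leftrightarrow s_p$ created by $w'_{s_p}=a_{s_ps_1}w_{s_1}$, together with the fact that the arcs inside $S_1$ are untouched. For the second claim, your final argument is the right way to make rigorous the paper's one-line assertion that the merging ``eliminates incoming arcs'': the strict arcs from $s_1$ to every vertex outside $S_1\cup\{s_p\}$ survive, so any arc into $C$ would close a cycle through $s_1$. That argument also makes your preliminary case split and the monotonicity observation of Step 1 unnecessary.
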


\begin{proof} Since $w$ and $w'$ differ only in the $s_p$-th component, the digraphs $G_{A,w}$ and $G_{A,w'}$ coincide on all arcs not involving $s_p$. 
By definition of $s_1\in S_1$ and $s_p\in S_p$, we have
$$\frac{w_{s_1}}{w_{s_p}} > a_{s_1 s_p}.$$
By construction of $w'$, it follows that
$$\frac{w_{s_1}}{w'{s_p}} = \frac{w_{s_1}}{a_{s_p s_1} w_{s_1}} = \frac{1}{a_{s_p s_1}} = a_{s_1 s_p},$$
using reciprocity of $A$. Hence, both arcs $(s_1,s_p)$ and $(s_p,s_1)$ are present in $G_{A,w'}$, implying that $s_1$ and $s_p$ lie in the same strongly connected component of $G_{A,w'}$.

Since all vertices in $S_1$ are mutually reachable in $G_{A,w}$ and their mutual reachability is unaffected in $G_{A,w'}$, it follows that all vertices of $S_1$ and the vertex $s_p$ belong to a single strongly connected component of $G_{A,w'}$.

For the second claim, if $w'$ remains inefficient, then the condensation graph $CG_{A,w'}$ is nontrivial. The merging of $s_p$ with $S_1$ eliminates incoming arcs to this component, implying that $s_p$ (now part of this component) is a vertex of the source vertex of $CG_{A,w'}$.
\end{proof}

\vspace{1ex}

The following result is an immediate consequence of Theorem \ref{t4}.

\begin{corollary} Let $A=[a_{ij}]\in {\cal PC}_n$, with $n\geq 4$. Let $w$  be an inefficient vector for $A$. Then there exists an efficient vector $w'$ obtained from $w$ by applying the transformation described in Theorem \ref{t4} at most $n-1$ times.
    \end{corollary}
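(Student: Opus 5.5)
The plan is to iterate the transformation of Theorem \ref{t4} and track a monotone quantity that strictly increases at each step. The natural candidate is the size of the source component of the condensation digraph. Set $w^{(0)}=w$. As long as $w^{(k)}$ is inefficient, let $w^{(k+1)}$ be obtained from $w^{(k)}$ by Theorem \ref{t4}, using the source $S_1^{(k)}$ and sink $S_p^{(k)}$ of $CG_{A,w^{(k)}}$ and vertices $s_1\in S_1^{(k)}$, $s_p\in S_p^{(k)}$.

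First I will show that the source component strictly grows. By the first part of Theorem \ref{t4}, all vertices of $S_1^{(k)}$ together with $s_p$ lie in one strongly connected component of $G_{A,w^{(k+1)}}$. Since $S_p^{(k)}\neq S_1^{(k)}$ when $w^{(k)}$ is inefficient, $s_p\notin S_1^{(k)}$, so this component has at least $|S_1^{(k)}|+1$ vertices. By the second part of Theorem \ref{t4}, if $w^{(k+1)}$ is still inefficient, this component is the source $S_1^{(k+1)}$ of $CG_{A,w^{(k+1)}}$. Hence
\[
|S_1^{(k+1)}|\geq |S_1^{(k)}|+1 .
\]

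Next comes the counting. The source component always has at least one vertex, and in the inefficient case it has at most $n-1$ vertices, since there is another component. If the procedure ran for $n-1$ steps with all of $w^{(0)},\dots,w^{(n-1)}$ inefficient, the source of $CG_{A,w^{(n-1)}}$ would have at least $1+(n-1)=n$ vertices. That contradicts $|S_1^{(n-1)}|\le n-1$. So some $w^{(k)}$ with $k\le n-1$ is efficient, and it is obtained from $w$ by at most $n-1$ applications of the transformation.

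The count can be sharpened using $|S_1^{(0)}|\geq 1$; Theorem \ref{p1} gives $\geq 3$ only for Perron vectors, which we do not assume here. The sharpening is not needed.

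The main obstacle is justifying that the merged component really is the source of the new condensation digraph, i.e.\ the second claim of Theorem \ref{t4}. I take that claim as given. If I wanted to double-check it, I would argue as follows. Changing only $w_{s_p}$ affects only arcs incident to $s_p$. Every vertex outside $S_1^{(k)}\cup\{s_p\}$ still has no arc into $S_1^{(k)}$, so in the semicomplete digraph $G_{A,w^{(k+1)}}$ the merged component receives arcs to all other vertices. This is because the merged component contains $S_1^{(k)}$, whose arcs to outside vertices are unchanged except possibly toward $s_p$, and $s_p$ is now inside. Being a component with arcs out to every other component of a transitive tournament, it is the source.
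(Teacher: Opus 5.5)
Your proof is correct and is the argument the paper has in mind when it calls the corollary an immediate consequence of Theorem \ref{t4}: each application puts $s_p$ into the source component, whose size starts at $\ge 1$, grows by at least one per step, and is at most $n-1$ while the vector is inefficient, so at most $n-1$ applications are needed. Your check of the second claim of Theorem \ref{t4} is also sound, and it is more explicit than the paper's own justification: the merged component sends arcs from $S_1^{(k)}$ to every vertex outside it, so it is the source.
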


When $w$ is the Perron vector of $A$, additional structural properties of $CG_{A,w}$ allow for a sharper bound (see Theorem \ref{p1}).

 \begin{corollary} Let $A=[a_{ij}]\in {\cal PC}_n$, with $n\geq 4$. Let $w$  be the inefficient Perron vector for $A$. Then there exists an efficient vector $w'$ obtained from $w$ by applying the transformation described in Theorem \ref{t4} at most $n-3$ times.
    \end{corollary}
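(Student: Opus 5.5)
Since the first corollary already gives at most $n-1$ applications for an arbitrary inefficient vector, the plan is to redo that counting argument while keeping track of the size of the source component. By Theorem \ref{p1}, that component is larger than a singleton when $w$ is the Perron vector.

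\textbf{Potential.} Let $w^{(0)}=w$, and let $w^{(k+1)}$ be obtained from $w^{(k)}$ by the transformation of Theorem \ref{t4} whenever $w^{(k)}$ is still inefficient. For each $k$, let $\sigma_k$ be the number of vertices of $G_{A,w^{(k)}}$ lying in the strongly connected component that is the source vertex of $CG_{A,w^{(k)}}$. The quantity to track is the number $n-\sigma_k$ of vertices outside the source component.

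\textbf{Monotonicity.} Theorem \ref{t4} says that all vertices of the old source component $S_1$, together with the vertex $s_p$, lie in one strongly connected component of $G_{A,w^{(k+1)}}$. If $w^{(k+1)}$ is still inefficient, that component is the source vertex of $CG_{A,w^{(k+1)}}$. The vertex $s_p$ was outside $S_1$, since it lay in the sink component and $S_1\neq S_p$ because $CG_{A,w^{(k)}}$ has more than one vertex. Hence $\sigma_{k+1}\ge\sigma_k+1$.

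\textbf{Termination.} A non-strongly-connected digraph has at least one vertex outside its source component. So as long as $w^{(k)}$ is inefficient we have $\sigma_k\le n-1$, and the process must stop with an efficient vector once $\sigma_k$ could no longer stay below $n$.

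\textbf{Counting.} By Theorem \ref{p1}, $\sigma_0\ge 3$. Since $\sigma$ increases by at least one per step and $\sigma_k\le n-1$ while $w^{(k)}$ is inefficient, we get $3+k\le\sigma_k\le n-1$, so $k\le n-4$ for every inefficient iterate $w^{(k)}$. The next application then produces an efficient vector, so the total number of applications is at most $n-3$, as claimed.

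\textbf{Main subtlety.} Theorem \ref{p1} uses the Perron property only for the initial vector $w$. After the first modification, $w^{(1)}$ is no longer the Perron vector of $A$, so Theorem \ref{p1} cannot be reapplied. The argument must therefore rely only on the initial bound $\sigma_0\ge3$ and on the monotone growth supplied by Theorem \ref{t4}. This is exactly what the potential-function argument does, and it is the step I would check most carefully.
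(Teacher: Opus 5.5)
Your argument is correct and is essentially the reasoning the paper intends (it gives no written proof, only the pointer to Theorem \ref{p1}): the initial source component has at least three vertices, and each application of Theorem \ref{t4} absorbs at least the vertex $s_p$ into the new source component, so at most $n-3$ steps can occur before the vector becomes efficient. Your explicit observation is the right point to make: Theorem \ref{p1} is used only for $w^{(0)}$, while the monotone growth comes from Theorem \ref{t4}, which holds for arbitrary inefficient vectors.
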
  

\section*{Acknowledgements}

This work is funded by national funds through the FCT – Funda\c{c}\~{a}o para a Ci\^{e}ncia e a Tecnologia, I.P., under the scope of the projects UID/297/2025 and UID/PRR/297/2025 (Center for Mathematics and Applications - NOVA Math).

\section*{Disclose statement}

No potential conflict of interest was reported by the author.

\bibliography{obras}

\end{document}